\theoremstyle{plain}
\newtheorem{theorem}{Theorem}[section]
\newtheorem{lemma}[theorem]{Lemma}
\newtheorem{definition}[theorem]{Definition}
\newtheorem{proposition}[theorem]{Proposition}
\newtheorem{remark}[theorem]{Remark}
\numberwithin{equation}{section}
\newcommand{\ra}{\longrightarrow}
\newcommand{\N}{\mathbb{N}}
\newcommand{\C}{\mathcal{C}}
\newcommand{\nil}[1]{{#1}^{\mathrm{uni}}}
\newcommand{\Efgs}[2]{\pi^{\rm \acute{e}t}(#1,\, #2)}
\newcommand{\Nfgs}[2]{\pi^N(#1,\, #2)}
\newcommand{\Sfgs}[2]{\pi^S(#1,\, #2)}
\newcommand{\Ffgs}[2]{\pi_F(#1,\, #2)}
\newcommand{\Ufgs}[2]{\pi^{\mathrm{uni}}(#1,\, #2)}
\newcommand{\Ffgsu}[2]{\pi_F^\mathrm{uni}(#1,\, #2)}
\newcommand{\Gfgp}[2]{\pi_1(#1,\, #2)}
\newcommand{\Lfgp}[2]{\pi^{\rm loc}(#1,\, #2)}
\newcommand{\Ns}[1]{\mathbf{Ns}(#1)}
\newcommand{\sheaf}[1]{\ensuremath{\mathcal{#1}}}
\newcommand{\cat}[1]{\ensuremath{\mathcal{#1}}}
\newcommand{\struct}[1]{\ensuremath{\mathcal{O}_{#1}}}
\begin{document}
\baselineskip=15.5pt

\title{A note on certain Tannakian group schemes}
\author{Sanjay~Amrutiya}
\address{Department of Mathematics, IIT Gandhinagar,
 Near Village Palaj, Gandhinagar - 382355, India}
 \email{samrutiya@iitgn.ac.in}
\subjclass[2000]{Primary: 14L15, 14F05}
\keywords{F-fundamental group scheme, Frobenius-finite Vector bundles}
\date{}
\begin{abstract}
\noindent In this note, we prove that the $F$--fundamental group scheme is birational invariant for smooth projective varieties.
We prove that the $F$--fundamental
group scheme is naturally a quotient of the Nori fundamental group scheme. For elliptic curves, it turns out that the $F$-fundamental group 
scheme and the Nori fundamental group scheme coincides. We also consider an extension of the Nori fundamental
group scheme in positive characteristic using semi-essentially finite vector bundles and prove that in this way, 
we do not get a non-trivial extension of the Nori fundamental group scheme for elliptic curves, unlike in 
characteristic zero.
\end{abstract}
 \maketitle
\section{Introduction}
Madhav Nori (in \cite{No}) has introduced the fundamental group scheme of a reduced and connected scheme $X$ over 
a field $k$. If $X$ is a proper integral scheme over a field $k$ with $x\in X(k)\neq \emptyset$, then the the Nori 
fundamental group scheme, denote by $\Nfgs{X}{x}$, is Tannaka dual to a neutral Tannakian category of 
\emph{essentially finite} vector bundles over $X$. 
It was proved in \cite{No} that the fundamental group scheme is birational invariant for smooth projective varieties.
The notion of $S$-fundamental group scheme was introduced in \cite{BPS} for curves and in \cite{La11}
for higher dimensional varieties. The $S$-fundamental group scheme $\Sfgs{X}{x}$ for a pointed smooth projective 
variety is the affine group scheme corresponding to the Tannakian category of \emph{Nori-semistable} bundles on $X$. 
The birational invariance of the $S$-fundamental group scheme is proved in \cite{HM}.

In \cite{AB}, the authors constructed a neutral Tannakian catefory over a perfect field of prime characteristic for
any pointed  projective variety $(X, x)$. This neutral Tannakian category is a tensor generated category by the 
family of \emph{Frobenius-finite} vector bundles over $X$. The corresponding affine group scheme is called the 
$F$-fundamental group scheme of $X$, and is denoted by $\Ffgs{X}{x}$.

In this note, we prove that the $F$-fundamental group scheme is birational invariant following the techniques 
of \cite{HM}. We also observe that the $F$-fundamental group scheme is a quotient of the Nori fundamental group 
scheme, and for elliptic curves, they coincide using the results of Atiyah and Oda.  In \cite{Ot}, Otabe introduced 
an extension of the Nori fundamental group scheme using semi-finite vector bundles, and studied it's properties in 
characteristic zero. We  consider an analogue of this extension of the Nori fundamental group scheme in positive 
characteristic, and prove that we do not get a non-trivial extension of the Nori fundamental group scheme for 
elliptic curves. Over finite fields, all these Tannakian fundamental group schemes coincides.

\section{Preliminaries}
In this section, we recall definition of certain fundamental group scheme associated to a proper connected scheme 
defined over $k$. 

Let $X$ be a proper integral scheme defined over a perfect field $k$ endowed with a rational point $x\in X(k)$.
A vector bundle $E$ over $X$ is called \emph{\'{e}tale trivializable} if there exists a finite \'{e}tale covering 
$\psi : Y\ra X$ such that the pull-back $\psi^*E$ is trivializable.
The affine group scheme corresponding to the neutral Tannakian category $\cat{C}^{\rm \acute{e}t}(X)$ 
defined by all \'{e}tale trivializable vector bundles $E$ over $X$ (the fiber functor for this neutral Tannakian 
category sends $E$ to its fiber $E(x)$ over $x$) is called the \emph{(geometric) \'etale fundamental group scheme} 
and is denoted by $\Efgs{X}{x}$. When $k$ is algebraically closed field of characteristic zero, the algebraic fundamental 
group $\Gfgp{X}{x}$ is canonically isomorphic to the group of $k$-valued points of the \'etale fundamental group scheme 
$\Efgs{X}{x}$.

A vector bundle $E$ over $X$ is said to be \emph{Nori--semistable} if for every non-constant morphism
$f\,:\, C\,\longrightarrow\, X$ with $C$ a smooth projective curve, the pull-back $f^*E$ on $C$ 
is semi-stable of degree zero.

Let $\Ns{X}$ denote the category of all Nori--semistable vector bundles over $X$. With the usual notion of tensor
product and the fibre functor defined by the $k$-rational point $x$, the category $\Ns{X}$ defines a neutral Tannakian 
category over $k$. The affine group scheme corresponding to the neutral Tannakian category $\Ns{X}$ is called the 
$S$-funamental group scheme, and is denoted by $\Sfgs{X}{x}$ \cite{BPS, La11}.

For any vector bundle $E$ over $X$ and for any polynomial $f$ with
non-negative integer coefficients, we define
$$
f(E) := \bigoplus_{i=0}^n (E^{\otimes i})^{\oplus a_i},
$$
where $f(t) = \sum_{i=0}^n a_it^i$ with $a_i\in \N\cup \{0\}, \, \forall
\, i\in \{0, 1, 2,\dots, n\}$.

A vector bundle $E$ over $X$ is said to be \emph{finite} if there are distinct polynomials $f$ and $g$ with
non-negative integer coefficients such that $f(E)$ is isomorphic to $g(E)$.

For any vector bundle $E$, let $S(E)$ denote the collection of all the indecomposable components of $E^{\otimes n}$,
for all non-negative integers $n$. Then, the vector bundle $E$ is finite if and only if $S(E)$ is a finite set 
\cite[Lemma 3.1]{No}.
A vector bundle $E$ over $X$ is said to be \emph{essentially finite} if it is Nori-semistable subquotient
of a finite vector bundle over $X$.

Let $\cat{C}^N(X)$ be the full subcategory of $\Ns{X}$ of essentially finite vector bundles over $X$ 
(see \cite{No} for the definition of essentially finite vector bundles on $X$).
With usual tensor product of vector bundles and the fibre functor defined by the $k$-rational point $x$,  the category 
$\cat{C}^N(X)$ defines a neutral Tannakian category  over $k$. The affine group scheme corresponding to the 
neutral Tannakian category $\C^N(X)$ is called \emph{the Nori fundamental group scheme} of $X$ over $k$ with 
base point $x$, and is denoted by $\Nfgs{X}{x}$.

A vector bundle $E$ over $X$ is called $F$--trivial if $(F^i_X)^*E$ is trivial for some $i$.
Note that the category of all F--trivial vector bundles form a Tannakian category; the corresponding affine group 
scheme is called \emph{the local fundamental group scheme}, which is denoted by $\Lfgp{X}{x}$.

For any polynomial $g(t)\, =\, \sum_{i=0}^m n_i t^i$ with $a_i\in \N\cup \{0\}, \, \forall \, i\in \{0, 1,\dots, n\}$, define
$$
{\widetilde g}(E)\, :=\, \bigoplus_{i=0}^m ((F^i_X)^*E)^{\oplus n_i}\, ,
$$
where $F^0_X$ is the identity morphism of $X$.

A vector bundle $E$ over $X$ is called \textit{Frobenius--finite} if there are two distinct polynomials $f$ and $g$ 
of the above type such that ${\widetilde f}(E)$ is isomorphic to ${\widetilde g}(E)$.

For any vector bundle $E$ over $X$, let $\mathrm{I}(E)$ denote the set of all indecomposable components of 
$\{(F_X^n)^*E\}_{n\geq 0}.$

Let $\mathrm{TFF}(X)$ denote the collection of all finite tensor products of Frobenius--finite vector bundles over $X$. 
Consider the full subcategory, denoted by $\cat{C}_F(X)$, of the category $\Ns{X}$ whose objects are all Nori--semistable 
subquotients of finite direct sum of elements of $\mathrm{TFF}(X)$.

With the usual tensor product of vector bundles and the fibre functor defined by $x\in X(k)$, the category $\cat{C}_F(X)$ 
defines a neutral Tannakian category  over $k$.  The {\it $F$--fundamental group--scheme} of $X$ with the base point 
$x$, denoted by $\Ffgs{X}{x}$,  is the affine group scheme associated to the neutral Tannakian category
$(\cat{C}_F(X), T_x)$, where $T_x$ is the neutral fibre functor on $\C_F(X)$.

\section{Some properties of Tannakian group schemes}

We have the following diagram in which each arrow is a natural faithfully flat morphism of affine group schemes:
\[
\xymatrix{
\Sfgs{X}{x} \ar[r] \ar[rd] & \Nfgs{X}{x} \ar[r] \ar@{.>}[d] \ar[rd] & \Efgs{X}{x} \\
& \Ffgs{X}{x} \ar[r] & \Lfgp{X}{x} 
}
\]
The existence of the dotted arrow follows from Proposition \ref{Nori-F-relation}. For this we will use the following 
theorem of Lange and Stuhler.

\begin{proposition}\cite[Theorem 1.4]{LS}\label{LS-thm}
Let $E$ be a vector bundle on $X$ defined over $k$.
\begin{enumerate}
\item If $(F_X^n)^*E$ is isomorphic to $E$ for some $n > 0$, then $E$ is \'{e}tale trivializable.
\item If $E$ is stable \'{e}tale trivializable, then there exist some $n > 0$ such that $(F_X^n)^*E$ is isomorphic to $E$ (cf. \cite{BD}).
\item If $k = \mathbb{F}_p$ or $k = \overline{\mathbb{F}_p}$ and $E$ is \'{e}tale trivializable,
then there exist some $n > 0$ such that $(F_X^n)^*E$ is isomorphic to $E$.
\end{enumerate}
\end{proposition}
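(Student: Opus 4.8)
The plan is to treat the three parts separately, using throughout the dictionary between Frobenius-semilinear structures on a vector bundle and \'etale local systems, together with \'etale descent along a finite Galois cover.

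For (1), suppose $\sigma\colon (F_X^n)^*E \xrightarrow{\sim} E$ and put $q=p^n$, $F=F_X^n$. First I would repackage $\sigma$ as a $q$-linear (that is, $F$-semilinear) automorphism $\Psi\colon E\to E$, namely the composite of the canonical $q$-linear map $E\to F^*E$, $v\mapsto v\otimes 1$, with $\sigma$. The key local input is that, after an \'etale base change trivializing $E$, the endomorphism $\Psi$ becomes multiplication by a matrix $A\in GL_r(\mathcal{O})$, and the sheaf of solutions of $v=\Psi(v)$, i.e.\ of $v^{(q)}=A^{-1}v$, is finite \'etale over the base: this is Lang's theorem for $GL_r$ (the morphism $g\mapsto g^{(q)}g^{-1}$ of $GL_r$ is \'etale surjective with kernel $GL_r(\mathbb{F}_q)$). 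Hence $\mathcal{L}:=\ker(\mathrm{id}-\Psi)$, formed on the \'etale site of $X$, is a lisse sheaf of $\mathbb{F}_q$-vector spaces of rank $r$ with $\mathcal{L}\otimes_{\mathbb{F}_q}\mathcal{O}_X\xrightarrow{\sim}E$. Its monodromy representation takes values in the finite group $GL_r(\mathbb{F}_q)$, so there is a finite \'etale cover $\psi\colon Y\to X$ with $\psi^*\mathcal{L}$ constant; then $\psi^*E\cong\psi^*\mathcal{L}\otimes\mathcal{O}_Y\cong\mathcal{O}_Y^{\oplus r}$, so $E$ is \'etale trivializable.

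For (2) and (3), fix a connected finite \'etale Galois cover $\psi\colon Y\to X$ with group $\Gamma$ trivializing $E$ (after replacing $X$ by $H^0(X,\mathcal{O}_X)$ we may assume $X$ geometrically connected), and set $k'=H^0(Y,\mathcal{O}_Y)$, a finite extension of the perfect field $k$. Since the relative Frobenius of an \'etale morphism is an isomorphism, the absolute Frobenius of $X$ pulls $\psi$ back to $\psi$, so every $(F_X^m)^*E$ is still trivialized by $\psi$; \'etale descent along $\psi$ then identifies the bundles trivialized by $\psi$, tensor-compatibly, with the finite-dimensional modules over the twisted group algebra $k'\rtimes\Gamma$, and $(F_X^m)^*E$ corresponds to the module twisted by the $m$-th power of the Frobenius of $k'$, which is a ring automorphism of $k'\rtimes\Gamma$ because $k'$ is perfect and $\Gamma$ acts on it by field automorphisms. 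Next I would use that a bundle trivialized by $\psi$ is stable precisely when the corresponding $k'\rtimes\Gamma$-module is simple (a subbundle of slope $0$ descends to a submodule), together with the fact that a finite-dimensional $k$-algebra has only finitely many isomorphism classes of simple modules; hence $\{(F_X^m)^*E : m\ge 0\}$ is finite. Pigeonhole gives $(F_X^a)^*E\cong (F_X^{a+m})^*E$ for some $a\ge 0$, $m\ge 1$; writing $(F_X^{a+m})^*E=(F_X^a)^*\big((F_X^m)^*E\big)$ and cancelling $(F_X^a)^*$ — which is injective on isomorphism classes in this class, being a twist by a ring automorphism — we obtain $E\cong (F_X^m)^*E$, which is (2). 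For (3) stability is not needed: when $k=\mathbb{F}_p$ or $k=\overline{\mathbb{F}_p}$ the module attached to $E$ is already defined over a finite subfield $\mathbb{F}_{p^d}$ (finitely many structure matrices with entries in a finite field, with $\Gamma$ acting on $k'$ through a finite Galois group), and on $\mathbb{F}_{p^d}$ a power $\mathrm{Frob}^d$ of Frobenius is the identity, whence $(F_X^d)^*E\cong E$.

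The step I expect to be the main obstacle is the finiteness assertion in (2). This is exactly where the stability hypothesis is indispensable: over a perfect field of characteristic $p$ a finite group can have infinitely many pairwise non-isomorphic indecomposable modules (wild representation type), so without stability the Frobenius orbit of $E$ may genuinely be infinite; one must recognize stable \'etale trivializable bundles as the simple modules and invoke the finiteness of the set of simple modules over $k'\rtimes\Gamma$ (compare \cite{BD}). The \'etale-descent lemma used in (1) — that a Frobenius-semilinear equation has a finite \'etale solution sheaf — is the other delicate point, but it reduces to Lang's theorem in a routine way.
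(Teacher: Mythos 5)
This proposition is not proved in the paper at all: it is quoted as a known result from Lange--Stuhler \cite{LS} (Theorem 1.4), with part (2) attributed to \cite{BD}, so there is no in-text argument to compare yours against. Your reconstruction is, in substance, the standard proof from those references, and I see no genuine gap. For (1), converting the isomorphism $(F_X^n)^*E\cong E$ into a $p^n$-semilinear automorphism and taking the fixed-point sheaf is exactly the Lange--Stuhler mechanism; the one imprecision is your appeal to ``Lang's theorem for $GL_r$'', which is a statement about surjectivity of $g\mapsto g^{-1}F(g)$ on a connected group over a finite field, whereas what you actually need is that the solution scheme of $v^{(q)}=A^{-1}v$ inside the total space of $E$ is finite \'etale over $X$ of degree $q^r$ --- this follows from a direct Jacobian computation (the derivative of $v\mapsto v^{(q)}-A^{-1}v$ is $-A^{-1}$, invertible) rather than from Lang's theorem proper, though the two live in the same circle of ideas. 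For (2) and (3), the dictionary between bundles trivialized by a fixed connected Galois cover and modules over the crossed product $k'\rtimes\Gamma$, the identification of stable bundles with simple modules (you state the descent of slope-zero subbundles in the less useful direction, but the implication you actually use --- a proper nonzero submodule yields a proper nonzero degree-zero subsheaf, contradicting stability --- is correct), the finiteness of the set of simple modules over a finite-dimensional $k$-algebra, and the observation that Frobenius pullback acts on this module category as twisting by a ring automorphism (hence is invertible on isomorphism classes, which justifies the cancellation of $(F_X^a)^*$) together give a complete argument; your separate treatment of (3) via the finiteness of the field of definition of the cocycle is likewise the standard one. In short: correct, and faithful to the cited sources the paper relies on.
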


\begin{proposition}\label{Nori-F-relation}
There is a faithfully flat morphism $\Nfgs{X}{x}\ra \Ffgs{X}{x}$ of affine group schemes.
\end{proposition}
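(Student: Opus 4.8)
The statement asserts a faithfully flat morphism $\Nfgs{X}{x} \to \Ffgs{X}{x}$ of affine group schemes. By the Tannakian dictionary (as in \cite{No}, \cite{DM}), a morphism of affine group schemes going this direction corresponds to an exact, faithful, tensor-compatible functor in the opposite direction $\cat{C}_F(X) \hookrightarrow \cat{C}^N(X)$, and the morphism is faithfully flat precisely when this functor is fully faithful and closed under subobjects (i.e. realizes $\cat{C}_F(X)$ as a Tannakian subcategory of $\cat{C}^N(X)$). So the whole proof reduces to showing: \emph{every object of $\cat{C}_F(X)$ is already an object of $\cat{C}^N(X)$}, and that $\cat{C}_F(X)$ is a full subcategory closed under subquotients inside $\cat{C}^N(X)$.

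\begin{proof}[Proof plan]
By the Tannakian correspondence, to produce a faithfully flat homomorphism $\Nfgs{X}{x} \to \Ffgs{X}{x}$ it suffices to exhibit the inclusion $\cat{C}_F(X) \subseteq \cat{C}^N(X)$ as a full Tannakian subcategory closed under taking subobjects; the fibre functors on both sides are the common functor $E \mapsto E(x)$, so compatibility is automatic. Both categories are by definition full subcategories of $\Ns{X}$, hence fullness and the tensor compatibility come for free; the substantive point is that every object of $\cat{C}_F(X)$ is essentially finite.

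First I would show that every Frobenius-finite vector bundle $E$ is essentially finite. Let $E$ be Frobenius-finite, so ${\widetilde f}(E) \cong {\widetilde g}(E)$ for distinct polynomials $f, g$ of the allowed type; equivalently, the set $\mathrm{I}(E)$ of indecomposable components of $\{(F_X^n)^*E\}_{n \ge 0}$ is finite. Decompose $E = \bigoplus_j E_j$ into indecomposables; each $E_j$ is again Frobenius-finite with $\mathrm{I}(E_j)$ finite, so I may assume $E$ is indecomposable. Since $\mathrm{I}(E)$ is finite, among $E, F_X^*E, (F_X^2)^*E, \dots$ only finitely many isomorphism classes of indecomposables occur, so there exist $m > n \ge 0$ with $(F_X^m)^*E \cong (F_X^n)^*E$; writing this as $(F_X^{m-n})^*(F_X^n)^*E \cong (F_X^n)^*E$ and applying Proposition \ref{LS-thm}(1), the bundle $(F_X^n)^*E$ is étale trivializable, hence in particular Nori-semistable (its pullback to any smooth projective curve becomes trivial after a further finite étale cover, forcing it to be semistable of degree zero). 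Since $F_X$ is faithfully flat, Nori-semistability descends: $f^*E$ on any curve $C$ is such that $(F_C)^{*n} f^* E = f^* (F_X^n)^* E$ is semistable of degree zero, and by a standard argument (semistability and degree are detected after Frobenius pullback on a curve) $f^*E$ is itself semistable of degree zero; thus $E$ is Nori-semistable. Finally, an étale-trivializable bundle is essentially finite — this is classical: étale-trivializable bundles form a Tannakian subcategory of $\cat{C}^N(X)$ (indeed they are exactly the ``étale finite'' part), so $(F_X^n)^*E$ is essentially finite, and hence so is $E$, because the essentially finite bundles are closed under Frobenius-descent inside $\Ns{X}$ (or: $E$ is a Nori-semistable bundle with $\mathrm{I}(E)$ finite, and Nori-semistable bundles with a finite such set are essentially finite by \cite[Lemma 3.1]{No}-type reasoning applied to the finite set $S(E) \cup \mathrm{I}(E)$).

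Next I would propagate this from Frobenius-finite bundles to all of $\cat{C}_F(X)$: $\cat{C}^N(X)$ is a Tannakian subcategory of $\Ns{X}$, hence closed under $\oplus$, $\otimes$, and subquotients; so any finite tensor product of Frobenius-finite bundles (an element of $\mathrm{TFF}(X)$) lies in $\cat{C}^N(X)$, as does any finite direct sum of such, and therefore any Nori-semistable subquotient of such — which is precisely the definition of an object of $\cat{C}_F(X)$. This gives the inclusion $\cat{C}_F(X) \subseteq \cat{C}^N(X)$ as full subcategories of $\Ns{X}$, and by construction $\cat{C}_F(X)$ is closed under subobjects (it is carved out by ``being a subquotient of \dots'', which is subobject-stable). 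The resulting exact faithful tensor functor $\cat{C}_F(X) \hookrightarrow \cat{C}^N(X)$ commuting with the fibre functors $T_x$ therefore induces, by \cite[Proposition 2.21]{No} or the corresponding statement in Deligne--Milne, a faithfully flat morphism $\Nfgs{X}{x} \to \Ffgs{X}{x}$.

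The main obstacle is the claim that a Frobenius-finite bundle is Nori-semistable, i.e. that it genuinely lies in $\Ns{X}$ so that it can be spoken of as an object of $\cat{C}^N(X)$ at all — one must check that étale-trivializability (obtained from Proposition \ref{LS-thm}(1) only after a Frobenius twist) really does descend to Nori-semistability of $E$ itself, using faithful flatness of the Frobenius and the fact that on a smooth projective curve semistability and vanishing of the degree are insensitive to Frobenius pullback. Everything else is the formal Tannakian bookkeeping of closure under subquotients and tensor products inside $\Ns{X}$.
\end{proof}
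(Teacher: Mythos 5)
Your proof is correct and follows essentially the same route as the paper: finiteness of $\mathrm{I}(E)$ yields $(F_X^m)^*E\cong(F_X^k)^*E$, Lange--Stuhler (Proposition \ref{LS-thm}) makes $(F_X^k)^*E$ \'etale trivializable, and essential finiteness of $E$ then follows because $E$ is trivialized by the finite surjective morphism $F_X^k\circ\psi$ --- the paper cites \cite[Chapter II, Proposition 7]{No} and \cite[Proposition 2.3]{BH07} for precisely the descent step you call ``Frobenius-descent,'' and leaves the Tannakian bookkeeping you spell out implicit. The only weak point is your parenthetical alternative via ``$S(E)\cup\mathrm{I}(E)$ finite,'' which is unjustified (finiteness of $\mathrm{I}(E)$ does not give finiteness of $S(E)$), but it is not load-bearing.
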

\begin{proof}
Let $E$ be Frobenius--finite vector bundle over $X$. Then $I(E)$ is a finite set. Since each Frobenius pullback
$(F_X^n)^*E$ is of rank $r = \mathrm{rk}(E)$, we can conclude that there are integers $m > k > 0$ such that
$(F_X^m)^*E \cong (F_X^k)^*E\,.$
Hence, we have $$(F_X^{m-k})^*(F_X^k)^*E = (F_X^m)^*E \cong (F_X^k)^*E\,.$$
By Proposition \ref{LS-thm}, there exists a finite \'etale covering $\psi \colon Y\ra X$ such that $\psi^*((F_X^k)^*E)$
is trivial. This proves that $E$ is essentially finite (see \cite[Chapter II, Proposition 7]{No} or \cite[Proposition 2.3]{BH07}). 
\end{proof}

\begin{proposition}\label{prop-FNS}
If $k = \mathbb{F}_p$, then $\Ffgs{X}{x} = \Nfgs{X}{x} = \Sfgs{X}{x}$.
\end{proposition}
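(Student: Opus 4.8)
The plan is to prove the stronger statement that over $k=\mathbb{F}_p$ the three Tannakian categories underlying $\Sfgs{X}{x}$, $\Nfgs{X}{x}$ and $\Ffgs{X}{x}$ coincide. By Proposition~\ref{Nori-F-relation} every Frobenius--finite bundle is essentially finite, and since the essentially finite bundles form a full subcategory of $\Ns{X}$ closed under tensor products, direct sums and Nori--semistable subquotients, this gives the chain of full subcategories $\cat{C}_F(X)\subseteq\cat{C}^N(X)\subseteq\Ns{X}$, all equipped with the same neutral fibre functor $E\mapsto E(x)$; these inclusions are exactly what induces the faithfully flat morphisms $\Sfgs{X}{x}\to\Nfgs{X}{x}\to\Ffgs{X}{x}$. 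Hence it suffices to show $\Ns{X}\subseteq\cat{C}_F(X)$, which I will do by proving that every Nori--semistable bundle on $X$ is Frobenius--finite (so that it lies in $\mathrm{TFF}(X)$, hence in $\cat{C}_F(X)$).

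The essential input --- and the step I expect to be the main obstacle --- is a finiteness statement: over $\mathbb{F}_p$, for each $r$ there are only finitely many isomorphism classes of Nori--semistable vector bundles on $X$ of rank at most $r$. This follows from boundedness: a Nori--semistable bundle is semistable with respect to a fixed polarization and has numerically trivial Chern classes, so the Nori--semistable bundles of rank $\leq r$ form a bounded family, i.e.\ are parametrized by a scheme of finite type over $\mathbb{F}_p$ (cf.\ \cite{La11}); since $\mathbb{F}_p$ is finite, such a scheme has only finitely many rational points, and via the usual Quot--scheme presentation every Nori--semistable bundle of rank $\leq r$ over $\mathbb{F}_p$ corresponds to one (one may reduce to the case $X$ projective here). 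It is exactly at this point that $k=\mathbb{F}_p$, rather than $\overline{\mathbb{F}_p}$, is used: over $\overline{\mathbb{F}_p}$ the group $\mathrm{Pic}^0$ already supplies infinitely many Nori--semistable line bundles.

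Granting this, let $E$ be a Nori--semistable bundle of rank $r$. For each $n\geq 0$ the bundle $(F_X^n)^*E$ is again Nori--semistable of rank $r$: by functoriality of the Frobenius, $g^*(F_X^n)^*E=(F_C^n)^*(g^*E)$ for any non-constant $g\colon C\to X$ from a smooth projective curve, and $g^*E$ is \emph{strongly} semistable of degree zero directly from the definition of Nori--semistability (apply it to $g\circ F_C^m$), so each Frobenius pullback of $g^*E$ is semistable of degree zero. Since a direct summand of a Nori--semistable bundle is Nori--semistable (restrict to curves), the set $\mathrm{I}(E)$ of indecomposable components of $\{(F_X^n)^*E\}_{n\geq 0}$ consists of Nori--semistable bundles of rank $\leq r$, hence is finite by the previous paragraph. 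As each $(F_X^n)^*E$ has rank $r$, it is a direct sum of at most $r$ members of $\mathrm{I}(E)$; arguing as in the proof of Proposition~\ref{Nori-F-relation}, there are integers $b>a\geq 0$ with $(F_X^a)^*E\cong(F_X^b)^*E$, so $\widetilde{f}(E)\cong\widetilde{g}(E)$ for the distinct polynomials $f(t)=t^a$, $g(t)=t^b$. Therefore $E$ is Frobenius--finite.

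Combining the two halves, $\cat{C}_F(X)=\cat{C}^N(X)=\Ns{X}$ as neutral Tannakian categories with the common fibre functor $T_x$, and passing to Tannaka duals yields $\Ffgs{X}{x}=\Nfgs{X}{x}=\Sfgs{X}{x}$.
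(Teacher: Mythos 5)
Your proof is correct and follows essentially the same route as the paper: both reduce the statement to showing that every Nori--semistable bundle lies in the smaller categories, and both extract an isomorphism $(F_X^{n_1})^*E\cong(F_X^{n_0})^*E$ from Langer's boundedness theorem combined with the finiteness of $\mathbb{F}_p$. The only (minor) difference is the endgame: you read off Frobenius--finiteness of $E$ directly from that isomorphism (via $f=t^{a}$, $g=t^{b}$), which is the most direct way to close the chain $\cat{C}_F(X)\subseteq\cat{C}^N(X)\subseteq\Ns{X}$, whereas the paper instead invokes Lange--Stuhler (Proposition \ref{LS-thm}) to conclude that $E$ is essentially finite.
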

\begin{proof}
We know that $\C^N(X) \subseteq \C_F(X) \subseteq \Ns{X}$. Therefore, it is enough to show that every 
Nori-semistable bundle on $X$ is essentially finite. To see this, let $E$ be a Nori-semistable vector bundle
on $X$. Then for each $n \geq 0$, the Frobenius pullbacks $(F_X^n)^*E$ is $H$-semistable and
$c_1(E)\cdot H^{n-1} = c_2(E)\cdot H^{n-2} = 0$ for any polarization $H$ on $X$. Hence, the family 
$\{(F_X^n)^*E\}_{n\geq 0}$ is bounded family, i.e., there exists a $k$-scheme $S$ of finite type and a coherent
sheaf $\sheaf{E}$ on $X\times S$ such that each $(F_X^n)^*E$ is isomorphic to $\sheaf{E}{|_{X\times \{s_n\}}}$ for
some $k$-point $s_n$ of $S$ (see \cite[Theorem 4.4]{La04}). Since $k$ is finite field, it follows that the family
$\{(F_X^n)^*E\}_{n\geq 0}$ contains only finitely many isomorphism classes. In other words, there are integers
$n_1 > n_0 >0$ such that $(F_X^{n_1})^*E \cong (F_X^{n_0})^*E$. Now, by Proposition \ref{LS-thm}, we conclude that 
certain Frobenius pullback of $E$ is \'etale trivializable, and hence $E$ is essentially finite vector bundle.
\end{proof}

\subsection*{Birational Invariance}
In \cite[Lemma 8.3]{La11}, Langer observed that $\Sfgs{X}{x}$ does not change while blowing up $X$ along smooth centres, 
and raised the question of birational invariance of the $S$-fundamental group scheme. For smooth projective surfaces, this would imply the birational invariance of the $S$-fundamental group scheme using weak factorization theorem. Due to lack of
weak factorization theorem for higher dimension in positive characteristic, this problem required a different techniques.
In \cite{HM}, Hogadi and Mehta proved the birational invariance of the $S$-fundamental group scheme using 
a vanishing theorem \cite[Theorem 2.1]{HM}.

In the same vein, we can see that the blow ups along smooth centres do not affect the $F$-fundamental group scheme.
To see this, let $\varphi \,:\, \mathrm{Bl}_Z(Y)\ra Y$ be a blow-up of $Y$ along a smooth center $Z$. 
By \cite[Proposition 3.2]{AB}, 
it follows that the induced homomorphism
$$
\hat{\varphi}: \Ffgs{\mathrm{Bl}_Z(Y)}{x}\ra \Ffgs{Y}{y}
$$
is faithfully flat. To see that $\hat{\varphi}$ is closed immersion, by \cite[Proposition 2.21]{DM}, we need to 
show that for $E\in \C_F(\mathrm{Bl}_Z(Y))$ there exists $E'\in \C_F(Y)$ such that $E$ is isomorphic to the 
sub--quotient of $\varphi^*E'$. By \cite[Theorem 1]{Ish}, one can conclude that $\varphi_*E$ is locally free and 
$E\simeq \varphi^*\varphi_*E$ (cf. \cite[Lemma 8.3]{La11}). Take $E' = \varphi_*E$. 
Note that the following diagram
\[
\xymatrix @C=3.5pc{
\mathrm{Bl}_Z(Y) \ar[r]^{F_{\mathrm{Bl}_Z(Y)}} \ar[d]_{\varphi} &
\mathrm{Bl}_Z(Y)\ar[d]^{\varphi}\\
Y \ar[r]_{F_Y} & Y
}
\]
is cartesian. From this, one can conclude that the vector bundle $E'$ is an object of $\C_F(Y)$.

In the following, we will see that using the techniques of \cite{HM} one can easily
deduced the birational invariance of the $F$-fundamental group scheme.

\begin{theorem}
Let $X$ and $Y$ be smooth projective $k$-varieties, and let $\phi \colon X\dashrightarrow Y$ be a birational map. 
Let $x_0\in X(k)$ be a point where $\phi$ is defined. Then $\Ffgs{X}{x}$ is isomorphic to $\Ffgs{Y}{\phi(x)}$.
\end{theorem}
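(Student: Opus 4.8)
The plan is to transcribe the Hogadi--Mehta argument \cite{HM} for the $S$--fundamental group scheme to the present setting, replacing $\Ns{}$ throughout by $\C_F$ and feeding in, at the decisive point, both the vanishing theorem \cite[Theorem 2.1]{HM} and the observation recorded above that the formation of $\C_F$ is compatible with Frobenius base change. First I would reduce to morphisms. Let $\Gamma\subseteq X\times Y$ be the closure of the graph of $\phi$ and let $W\to\Gamma$ be its normalization; then $W$ is a normal projective variety and the two projections yield proper birational morphisms $p\colon W\to X$ and $q\colon W\to Y$. As $\phi$ is defined at $x_0$, the point $(x_0,\phi(x_0))$ is a regular point of $\Gamma$ lying over $x_0$, so it lifts to $w_0\in W(k)$ with $p(w_0)=x_0$ and $q(w_0)=\phi(x_0)$. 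It therefore suffices to show: \emph{for every proper birational morphism $f\colon W\to X$ from a normal projective variety to a smooth projective variety, with $f(w_0)=x_0$, the homomorphism $\widehat f\colon\Ffgs{W}{w_0}\to\Ffgs{X}{x_0}$ is an isomorphism} --- one then applies this to $p$ and $q$ and composes the resulting isomorphisms. (Over a field of characteristic zero one could instead invoke the weak factorization theorem together with the blow-up invariance established above.)

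To prove $\widehat f$ is an isomorphism I would use the criterion \cite[Proposition 2.21]{DM}: it is enough to check that $f^*\colon\C_F(X)\to\C_F(W)$ is fully faithful with image stable under subobjects (which gives faithful flatness of $\widehat f$, also a consequence of \cite[Proposition 3.2]{AB}) and that every object of $\C_F(W)$ is a subquotient of $f^*E$ for some $E\in\C_F(X)$ (which gives that $\widehat f$ is a closed immersion). Since $X$ is normal and $f$ is proper birational, $f_*\os_W=\os_X$, so the projection formula gives $H^0(W,f^*V)\cong H^0(X,V)$ for every vector bundle $V$ on $X$; as $\C_F(X)$ is closed under duals and tensor products, this yields full faithfulness of $f^*$ on $\C_F(X)$. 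Thus everything comes down to pushing bundles down from $W$ to $X$ and checking they remain Frobenius-finite.

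So, given $E\in\C_F(W)$, by \cite[Theorem 2.1]{HM} --- and by \cite[Theorem 1]{Ish} in the case where $f$ is a blow-up along a smooth centre --- the sheaf $f_*E$ is locally free and the natural map $f^*f_*E\to E$ is an isomorphism; in particular $E$ is a pullback, and a subobject $E\hookrightarrow f^*V$ in $\C_F(W)$ descends to $f_*E\hookrightarrow f_*f^*V=V$. That $f_*E$ actually lies in $\C_F(X)$ --- not merely in $\Ns{X}$ --- I would check as in the blow-up discussion above: $\C_F(W)$ is stable under $F_W^*$, the square
\[
\xymatrix{
W \ar[r]^{F_W} \ar[d]_{f} & W \ar[d]^{f} \\
X \ar[r]^{F_X} & X
}
\]
lets one identify $(F_X^n)^*f_*E$ with $f_*\bigl((F_W^n)^*E\bigr)$, so that $\mathrm{I}(f_*E)$ is finite, and Nori-semistability of $f_*E$ follows by lifting curves from $X$ to $W$ and using Nori-semistability of $E$, exactly as in \cite{HM}. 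With $f_*E\in\C_F(X)$ and $E\cong f^*f_*E$, both remaining conditions of \cite[Proposition 2.21]{DM} hold, $\widehat f$ is an isomorphism, and the reduction step finishes the proof.

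The hard part will be this last step: importing the descent statement ``$f_*E$ locally free and $f^*f_*E\cong E$'' --- the substance of \cite[Theorem 2.1]{HM} --- for $f$ only proper birational with $W$ possibly singular, and verifying that Frobenius-finiteness genuinely survives the pushforward, for which one must, in the general (non-blow-up) case, control the relative Frobenius of $f$ rather than appeal directly to a cartesian square; the point is to keep the whole argument inside $\C_F$ and never pass to the larger category $\Ns{}$.
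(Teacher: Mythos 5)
Your proposal is correct and follows essentially the same route as the paper: both transcribe the Hogadi--Mehta argument, resting on the same two inputs --- their descent/vanishing theorem giving $f_*E$ locally free with $f^*f_*E\cong E$ for $E$ Nori-semistable, and \cite[Lemma 3.1]{HM} identifying $(F_X^n)^*f_*E$ with $f_*\bigl((F_W^n)^*E\bigr)$. The only difference is packaging: you show each leg $p$, $q$ of the resolution separately induces an isomorphism via the Deligne--Milne criterion, whereas the paper works directly with the composite functor $q_*p^*\colon \C_F(X)\to \C_F(Y)$ and checks that it preserves Frobenius-finiteness and Nori-semistable subquotients.
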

\begin{proof}
Consider the diagram
\[
\xymatrix{
& Z \ar[ld]_p \ar[rd]^q &\\
X \ar@{.>}[rr]_\phi && Y
}
\]
where $Z$ is the normalization of the closure of graph of $\phi$, the morphisms $p$ and $q$ 
are birational. It is proved in \cite{HM} that if $E\in \Ns{X}$, then $q_*p^*E \in \Ns{Y}$, and hence the functor 
$$
q_*p^* \colon \Ns{X}\ra \Ns{Y}
$$
is an equivalence of Tannaka categories. To conclude that the $F$-fundamental group scheme is 
birational invarience, we have to check that the functor $q_*p^*$ induces an equivalence
between $\C_F(X)$ and $\C_F(Y)$. For this, let $E$ be a Frobenius--finite vector bundle on
$X$. Then, there exist two integers $m_1 > m_0 > 0$ such that we have
an isomorphism
\begin{equation}
(F_X^{m_1})^*E \cong (F_X^{m_0})^*E
\end{equation}
of vector bundles on $X$. Let $E' := p^*E$. Since $p\circ F_Z = F_X\circ p$, we have an isomorphism
\begin{equation}\label{eq-ffb}
(F_Z^{m_1})^*E' \cong (F_Z^{m_0})^*E'
\end{equation}
of vector bundles on $Z$. 
By \cite[Lemma 3.1]{HM}, we have an isomorphism 
\begin{equation}\label{base-change-isom}
(F_Y^n)^*(q_*E') \cong q_*((F_Z^n)^*E')
\end{equation}
of vector bundles on $Y$, for all $n \geq 0$. Using isomorphisms \eqref{base-change-isom} and \eqref{eq-ffb}, 
we can conclude that $q_*p^*E$ is a Frobenius--finite vector bundle on $Y$.

Let $E\in \C_F(X)$. Then there are finitely many elements $E_1, E_2, \dots, E_m $ in $\mathrm{TFF}(X)$ and Nori-semistable
subbundles
$$
V_1 \subseteq V_2 \subseteq \bigoplus_{i = 1}^m E_i
$$ 
such that $E\cong V_2/V_1$. Since the functor $q_*p^*$ takes Frobenius--finite vector bundles on $X$ to Frobenius--finite 
vector bundles on $Y$, it follows that $q_*p^* E_i \in \mathrm{TFF}(Y)$. It is easy to see that
$$
q_*p^* E \cong q_*p^*(V_2/V_1)\cong q_*p^*V_2/q_*p^*V_1\,.
$$
Note that $q_*p^*V_2$ and $q_*p^*V_1$ are Nori-semistable. This proves that $q_*p^* E \in \C_F(Y)$.

\end{proof}

\subsection*{Unipotent group scheme}
Recall that a vector bundle $E$ on $X$ is called unipotent if there exists a filtration
$$
0 = E_0\subset E_1\subset E_2\subset \cdots \subset E_n = E
$$
such that $E_i/E_{i-1} \cong \struct{X}$ for $1\leq i\leq n$.
In \cite[Chapter IV]{No}, Nori proved that the category $\nil{\C}(X)$ of unipotent vector bundles on $X$
is a Tannaka category. For $x\in X(k)$, the corresponding affine group scheme is called the unipotent group 
scheme of $X$ and is denoted by $\Ufgs{X}{x}$. 

If characteristic of $k$ is zero, then the category $\C^N(X)$ is semi-simple (cf. \cite[p. 84]{No}) and hence 
any finite nilpotent vector bundle will be trivial vector bundle. However, if characteristic of $k$ is $p > 0$, 
then there is a natural faithfully flat morphism $\Nfgs{X}{x}\ra \Ufgs{X}{x}$ (see \cite[Chapter IV; Proposition 3]{No}).

The largest unipotent quotient of the $F$-fundamental group scheme of $X$ is called the {\it unipotent part} of 
$\Ffgs{X}{x}$ and is denoted by $\Ffgsu{X}{x}$. By Tannaka duality, the category of finite dimensional 
$k$-representations of $\Ffgsu{X}{x}$ is equivalent (as a neutral Tannakian category) to the category $\nil{\C_F}(X)$. 
Moreover, we have a natural faithfully flat morphism $\Ufgs{X}{x}\ra \Ffgsu{X}{x}$ of affine group schemes.

If $k = \mathbb{F}_p$ or $k=\overline{\mathbb{F}_p}$, then by Proposition \ref{prop-FNS}, we have
 $\Ufgs{X}{x} = \Ffgsu{X}{x}$ (cf. \cite[Corollary 4.3]{AB}).

\subsection*{An extension of the Nori fundamental group scheme}
In \cite{Ot}, S. Otabe introduced an extension of the Nori fundamental group scheme using semi-finite vector bundles,
and studied it's properties in characteristic zero. 

To get the right analogue of an extension of the Nori fundamental group scheme in positive characteristic, we may 
consider the following:

\begin{definition}\cite[cf. Definition 2.2]{Ot}\rm{
A vector bundle $E$ on $X$ is called semi-essentially finite if there exists a filtration
$$
0 = E_0\subset E_1\subset E_2\subset \cdots \subset E_n = E
$$
such that $E_i/E_{i-1}$ is essentially finite indecomposable for all $i = 1, 2, \dots, n$.
}
\end{definition}

Let $\C^{EN}(X)$ be the category of semi-essentially finite vector bundles on $X$. 
By following the similar argument as in \cite[Chapter IV, Lemma 2]{No}, \cite[Propositiion 2.14]{Ot}, we have 

\begin{proposition}
With the usual tensor product of vector bundles and a neutral fibre functor $T_x$ of $\C^{EN}(X)$ which sends a 
vector bundle $E\in \C^{EN}(X)$ to its fibre over $x$, the category $\C^{EN}(X)$ defines a neutral Tannakian 
category over $k$.
\end{proposition}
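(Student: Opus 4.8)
The plan is to verify the three standard axioms that make $(\C^{EN}(X), T_x)$ a neutral Tannakian category over $k$, following the template used by Nori for unipotent bundles \cite[Chapter IV, Lemma 2]{No} and by Otabe for semi-finite bundles \cite[Proposition 2.14]{Ot}. First I would check that $\C^{EN}(X)$ is an abelian subcategory of the category of coherent sheaves on $X$ which is closed under the operations needed: direct sums, tensor products, duals, subobjects and quotients. The key structural input is that the class of essentially finite bundles is itself closed under tensor products, duals, direct sums, subquotients and extensions of degree-zero Nori-semistable bundles (this is Nori's theory, already recalled in the preliminaries), together with the elementary fact that a bundle admitting a filtration with essentially finite indecomposable quotients is Nori-semistable, so that $\C^{EN}(X)\subseteq \Ns{X}$ and the ambient tensor-abelian structure of $\Ns{X}$ restricts.

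The main point to establish is closure of $\C^{EN}(X)$ under the three operations: (i) \emph{subquotients} — given $E\in\C^{EN}(X)$ with filtration $0=E_0\subset\cdots\subset E_n=E$ having essentially finite indecomposable graded pieces, and a subbundle $F\subseteq E$, I would intersect $F$ with the $E_i$ and show the induced graded pieces $(F\cap E_i)/(F\cap E_{i-1})$ embed into the essentially finite indecomposable $E_i/E_{i-1}$; since a nonzero subobject of an indecomposable essentially finite bundle is again essentially finite but need not be indecomposable, one refines the filtration further using the fact that any essentially finite bundle has a finite filtration by indecomposable essentially finite subquotients (which follows because $\C^N(X)$ is artinian/noetherian, being the representation category of an affine group scheme). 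Quotients are handled dually, or by passing to duals. (ii) \emph{Tensor products} — for $E, E'\in\C^{EN}(X)$ one takes the filtration on $E\otimes E'$ induced by the tensor-product filtration; the graded pieces are tensor products of essentially finite indecomposables, which are essentially finite, and then one refines to make the pieces indecomposable exactly as above. (iii) \emph{Duals} — the dual filtration on $E^\vee$ has graded pieces $(E_i/E_{i-1})^\vee$, which are essentially finite, and indecomposability is preserved by dualizing.

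With closure under these operations in hand, the verification that $(\C^{EN}(X), T_x)$ is a rigid abelian $k$-linear tensor category with $\mathrm{End}(\struct{X}) = k$ is routine, and $T_x$, being the restriction of the exact faithful fibre functor $E\mapsto E(x)$ already known to be a fibre functor on $\Ns{X}$, is automatically an exact faithful $k$-linear tensor functor to $\Vect$; hence by the Tannakian reconstruction theorem \cite[Theorem 2.11]{DM} the category is neutral Tannakian. I expect the main obstacle to be step (i): showing that an arbitrary subbundle (not just a sub-\emph{sheaf}, so one must check the quotient is again locally free) of a semi-essentially finite bundle is semi-essentially finite, which requires knowing that the category $\C^N(X)$ of essentially finite bundles has finite length so that every essentially finite bundle decomposes into finitely many indecomposable essentially finite subquotients, and that intersecting a subbundle with a filtration yields \emph{subbundles} rather than merely subsheaves — this uses that all these bundles are Nori-semistable of degree zero, so saturations do not change degrees and the relevant quotients stay locally free on the smooth projective $X$.
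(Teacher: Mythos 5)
Your proposal is correct and follows exactly the route the paper itself takes, namely the template of \cite[Chapter IV, Lemma 2]{No} and \cite[Proposition 2.14]{Ot}: the paper gives no details beyond this citation, and your verification of closure of $\C^{EN}(X)$ under subquotients, tensor products and duals (reducing each time to the corresponding closure properties of essentially finite bundles inside the abelian category $\Ns{X}$, then refining filtrations via Krull--Schmidt to restore indecomposability of the graded pieces) is the intended argument. One small caution: your passing claim that essentially finite bundles are closed under \emph{extensions} is not something you need and is not true in general (it would make $\C^{EN}(X)=\C^{N}(X)$ automatically); the argument only uses closure under subquotients, tensor products and duals, which is what you actually invoke in steps (i)--(iii).
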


The affine group scheme corresponding to $\C^{EN}(X)$ is denoted by $\pi^{EN}(X, x)$. From the definition, we have the following sequence of faithfully flat homomorphisms 
$$
\Sfgs{X}{x} \ra \pi^{EN}(X, x) \ra \Nfgs{X}{x} 
$$
of affine group schemes. For, let $E\in \C^N(X)$. Then there exists a finite principal $G$-bundle $\pi\colon P\ra X$
such that $\pi^*E$ is trivial. We have a decomposition of vector bundle $E$ into direct sum of indecomposable bundles.
Using induction, it is enough to assume that $E = E_1 \oplus E_2$. Since $\pi^*E$ 
is trivial, it follows that $\pi^* E_1$  and $\pi^* E_2$ are degree zero subbundle of trivial vector bundle. This implies that $E_1$ and $E_2$ are essentially finite vector bundles.

If $X$ is an elliptic curve defined over an algebraically closed field of characteristic zero, then it is known 
that $\pi^{EN}(X, x) \cong \Nfgs{X}{x} \times \Ufgs{X}{x}$ \cite[Theorem 3.10]{Ot}. The situation in positive 
characteristic is quite different (see Theorem \ref{thm-EN-Nori}). 

\section{Tannakian group schemes of elliptic curves}
Let $C$ be an elliptic curve defined over a perfect filed $k$ of positive characteristic. 
Using Atiyah's classification of indecomposable vector bundles on elliptic curves, we have
 
\begin{lemma}\label{lemma-2}
Let $E$ be a Nori-semistable vector bundle on $C$. Then 
\begin{equation}\label{eq-main}
E\cong \bigoplus_{i = 1}^k F_{r_i}\otimes L_i
\end{equation}
where each $L_i$ is a line bundle of degree $0$.
\end{lemma}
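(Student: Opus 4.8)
\medskip

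The plan is to reduce the assertion to Atiyah's classification of indecomposable vector bundles on an elliptic curve. The first step is to observe that the definition of Nori-semistability, applied to the (non-constant) identity morphism $\mathrm{id}_C \colon C \to C$, already forces $E$ itself to be semistable of degree $0$; in fact the full strength of the hypothesis is not needed here. Since $E$ is a coherent sheaf on a projective variety, the Krull--Schmidt theorem applies and we may write $E \cong \bigoplus_{j=1}^{N} E_j$ with each $E_j$ indecomposable. Each $E_j$ is simultaneously a subbundle and a quotient bundle of $E$, so $0 \le \mu(E_j) \le 0$; hence $\deg E_j = 0$, and moreover $E_j$ is itself semistable, since any subsheaf of $E_j$ is a subsheaf of $E$ and therefore has slope $\le 0 = \mu(E_j)$. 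Thus each $E_j$ is an indecomposable vector bundle of degree $0$ on $C$.

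Next, I would invoke Atiyah's classification: for every $r \ge 1$ there is a unique indecomposable vector bundle $F_r$ of rank $r$ and degree $0$ admitting a non-zero global section -- namely the iterated non-split self-extension of $\mathcal{O}_C$ -- and every indecomposable vector bundle of rank $r$ and degree $0$ on $C$ is isomorphic to $F_r \otimes L$ for a unique line bundle $L$ of degree $0$. Applying this to each summand $E_j$ yields an isomorphism $E_j \cong F_{r_j} \otimes L_j$ with $\deg L_j = 0$, and taking the direct sum over $j$ gives the decomposition \eqref{eq-main}.

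The point that requires care -- and, I expect, the only genuinely non-formal part of the argument -- is the field of definition. Atiyah's theorem is classically proved over an algebraically closed field, whereas the statement is over a perfect field $k$, so one base changes to $\bar{k}$, applies the classification there, and then descends. The bundles $F_r$ cause no difficulty: since $\mathrm{Ext}^1(\mathcal{O}_C,\mathcal{O}_C) \cong H^1(C,\mathcal{O}_C)$ is one-dimensional, all non-split self-extensions of $\mathcal{O}_C$ are mutually isomorphic and one of them is already defined over $k$ (indeed over the prime field), so each $F_r$ is defined over $k$. What has to be tracked is the action of the Galois group on the line bundles $L_j$ appearing in $E \otimes_k \bar{k}$; grouping them into Galois orbits and descending the corresponding direct summands back to $k$ is the step that must be carried out carefully to obtain the statement in the generality asserted.
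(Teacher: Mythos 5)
Your proof follows the paper's argument exactly: decompose $E$ by Krull--Schmidt into indecomposable summands, observe that Nori-semistability (via the identity map) forces each summand to be indecomposable semistable of degree $0$, i.e.\ to lie in Atiyah's $\mathcal{E}_C(r,0)$, and then apply Atiyah's Theorem 5 to write each summand as $F_{r_i}\otimes L_i$. The one point where you go beyond the paper --- the Galois-descent issue over a perfect but not algebraically closed base field --- is a legitimate concern that the paper's two-line proof silently elides, but since you flag it rather than resolve it, your write-up is essentially the same argument at the same level of completeness.
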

\begin{proof}
Since $E$ is Nori-semistable, we have 
$
E = \bigoplus_{i = 1}^k E_i\;,
$
where $E_i\in \mathcal{E}_C(r, 0)$. By \cite[Theorem 5]{At}, we have $E_i\cong F_{r_i}\otimes L_i$ where each $L_i$ is a line bundle of degree $0$.
\end{proof}

\begin{theorem}\label{thm-Ffg-to-Nori} 
Let $C$ be an elliptic curve defined over a perfect filed $k$ of positive characteristic $p$. For a $k$-rational point
$x\in C(k)$, we have $\Ffgs{C}{0} = \Nfgs{C}{0}$.
\end{theorem}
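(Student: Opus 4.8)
The plan is to upgrade the faithfully flat homomorphism $\Nfgs{C}{0}\to\Ffgs{C}{0}$ of Proposition \ref{Nori-F-relation} to an isomorphism. Such a homomorphism is an isomorphism as soon as it is also a closed immersion, and by \cite[Proposition 2.21]{DM} this happens precisely when every object of $\C^N(C)$ is a subquotient of an object of $\C_F(C)$; since $\C_F(C)$ is already a Tannakian subcategory of $\Ns{C}$ (this being the content of Proposition \ref{Nori-F-relation}), it is equivalent to prove $\C^N(C)\subseteq\C_F(C)$, i.e.\ that every essentially finite vector bundle on $C$ is a Nori--semistable subquotient of a finite direct sum of elements of $\mathrm{TFF}(C)$. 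Using that $\mathrm{TFF}(C)$ is closed under tensor products and that $\C^N(C)$ is closed under direct summands, Lemma \ref{lemma-2} reduces this to the following claim: if $F_r\otimes L$ is essentially finite with $L$ a line bundle of degree $0$, then $F_r$ and $L$ are each Frobenius--finite. Indeed, granting the claim, $F_r,L\in\mathrm{TFF}(C)$, hence $F_r\otimes L\in\mathrm{TFF}(C)$, and any essentially finite bundle, which by Lemma \ref{lemma-2} is a direct sum $\bigoplus_i F_{r_i}\otimes L_i$ with the $L_i$ of degree $0$ (each summand being again essentially finite), therefore lies in $\C_F(C)$.

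For the line bundle $L$ I would take determinants: $\det(F_r\otimes L)=(\det F_r)\otimes L^{\otimes r}=L^{\otimes r}$ (since $\det F_r=\os_C$), and $L^{\otimes r}$, being the determinant of the $\Nfgs{C}{0}$--representation $F_r\otimes L$, is again essentially finite, hence a torsion point of $\mathrm{Pic}^0(C)$; therefore $L$ itself is torsion, say of order $p^am$ with $\gcd(m,p)=1$. Then $(F_C^n)^*L\cong L^{\otimes p^n}$, and these line bundles are periodic in $n$ for $n\ge a$ (as $L^{\otimes p^a}$ has order $m$ coprime to $p$), hence run through a finite subset of $\mathrm{Pic}^0(C)$; so $\mathrm{I}(L)$ is finite and $L$ is Frobenius--finite.

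The remaining point, that the Atiyah bundle $F_r$ is Frobenius--finite, is the one place where a genuine structural input is needed — namely Oda's computation of the Frobenius pull-back of $F_r$ on an elliptic curve — and this is the main obstacle. The mechanism is the ordinary/supersingular dichotomy: the iterated extension classes defining $F_r$ are governed by $H^1(C,\os_C)$, on which the absolute Frobenius acts bijectively if $C$ is ordinary and as $0$ if $C$ is supersingular. Consequently $(F_C)^*F_r\cong F_r$ when $C$ is ordinary, so that $\mathrm{I}(F_r)=\{F_r\}$, while $(F_C)^*F_r\cong\os_C^{\oplus r}$ when $C$ is supersingular, so that $\mathrm{I}(F_r)=\{F_r,\os_C\}$; in either case $\mathrm{I}(F_r)$ is finite and $F_r$ is Frobenius--finite. (In accordance with Proposition \ref{LS-thm}, $F_r$ is \'etale trivializable in the ordinary case and $F$--trivial in the supersingular case, which gives an independent check that it is essentially finite.) Combining this with the previous paragraph yields $\C^N(C)=\C_F(C)$ as neutral Tannakian categories with the common fibre functor $T_0$, whence $\Ffgs{C}{0}=\Nfgs{C}{0}$. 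Apart from the description of $(F_C)^*F_r$, everything is formal bookkeeping with Tannakian subcategories together with the elementary finiteness of $\mathrm{I}(L)$ for a torsion line bundle.
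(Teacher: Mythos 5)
Your argument follows the same route as the paper's: decompose an essentially finite bundle via Atiyah's classification (Lemma \ref{lemma-2}), show the degree-zero line bundles occurring are torsion, hence Frobenius--finite, by a determinant argument, and show the Atiyah bundles $F_r$ are Frobenius--finite using Oda's computation of $F_C^*F_r$, split according to the ordinary/supersingular dichotomy. Your opening Tannakian bookkeeping (faithfully flat plus closed immersion via \cite[Proposition 2.21]{DM}, reducing to $\C^N(C)\subseteq \C_F(C)$) just makes explicit what the paper leaves implicit, and your computation $\det(F_r\otimes L)\cong L^{\otimes r}$ is in fact more careful than the paper's, which asserts $\det(F_{r_i}\otimes L_i)\cong L_i$; your extra step ($L^{\otimes r}$ torsion implies $L$ torsion) is needed and correct.

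The one step that is wrong as stated is the claim that $(F_C)^*F_r\cong \os_C^{\oplus r}$ on a supersingular curve for \emph{every} $r$. Oda's result gives this only for $1\leq r\leq p$, and the paper accordingly treats $r>p$ as a separate case. Indeed, for supersingular $C$ one has $(F_C)_*\os_C\cong F_p$, so $\mathrm{Hom}((F_C)^*F_r,\os_C)\cong \mathrm{Hom}(F_r,F_p)$ has dimension $\min(r,p)$ by Atiyah; for $r>p$ this is $p<r$, so $(F_C)^*F_r$ cannot be trivial. Your heuristic (Frobenius kills $H^1(C,\os_C)$) only controls the rank-two extension classes, not the full iterated extension data. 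The conclusion you actually need, namely that $\mathrm{I}(F_r)$ is finite, does survive, and by a cheaper argument that avoids any explicit formula: $F_C^*$ preserves unipotent bundles, and every indecomposable direct summand of a unipotent bundle on an elliptic curve is some $F_s$ (a summand of the form $F_s\otimes L$ must admit a nonzero map to $\os_C$, forcing $L\cong\os_C$), so $\mathrm{I}(F_r)\subseteq\{F_1,\dots,F_r\}$ in all cases. With that repair your proof is complete and matches the paper's.
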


\begin{proof}
Let us first consider the case that $C$ is ordinary. Let $E$ be an essentially finite vector bundle on $C$.
Then by Lemma \ref{lemma-2} , it follows that 
$
E\cong \bigoplus_{i = 1}^k F_{r_i}\otimes L_i
$
where each $L_i$ is a line bundle of degree $0$. By \cite[Proposition 2.10]{Od}, we have $F_C^*F_r\cong F_r$, and hence
$F_{r_i}\in \C_F(X)$. Since $F_{r_i}\otimes L_i$ is essentially finite vector bundles and $\det (F_{r_i}\otimes L_i) \cong L_i$, 
it follows that $L_i$ is essentially finite. By Theorem \ref{LS-thm}(2), we see that $L^{p^n - 1} \cong \struct{X}$ for some $n>0$.
This proves that $E\in \C_F(X)$. 

Now assume that $C$ is super-singular. Let $E$ be an essentially finite vector bundle of rank $r$ on $C$. 
By Lemma \ref{lemma-2}, it follows that the vector bundle $E$ will be of the form \eqref{eq-main}.
If $1\leq r\leq p$, then by \cite[Proposition 2.10]{Od}, we have $F_C^*F_r\cong \struct{C}^r$, and hence 
$F_{r_i}\in \C_F(C)$. Now using the argument as above, we can conclude that $E$ is an object of $\C_F(X)$.
If $r > p$, then again using \cite[Proposition 2.10]{Od}, we can again conclude that $E\in \C_F(C)$. 

\end{proof}

\begin{theorem}\label{thm-EN-Nori}
Let $C$ be an elliptic curve defined over a perfect filed $k$ of positive characteristic $p$. For a $k$-rational point
$x\in C(k)$, we have $\Sfgs{C}{x} \ra \pi^{EN}(C, x) = \Nfgs{C}{x}$.
\end{theorem}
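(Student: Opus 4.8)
The plan is to read the statement off from Tannaka duality. The faithfully flat morphisms $\Sfgs{C}{x}\to \pi^{EN}(C,x)\to \Nfgs{C}{x}$ recorded above come from the inclusions of neutral Tannakian subcategories $\C^N(C)\subseteq \C^{EN}(C)\subseteq \Ns{C}$, so the only thing left to prove is the reverse inclusion $\C^{EN}(C)\subseteq \C^N(C)$; equivalently, that the surjection $\pi^{EN}(C,x)\to \Nfgs{C}{x}$ is an isomorphism. Thus it suffices to show that every semi-essentially finite vector bundle on the elliptic curve $C$ is already essentially finite.

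Let $E\in\C^{EN}(C)$, with a filtration $0=E_0\subset E_1\subset\cdots\subset E_n=E$ whose successive quotients $E_i/E_{i-1}$ are essentially finite and indecomposable. I would first record two structural consequences. Each $E_i/E_{i-1}$ is Nori-semistable and indecomposable, so by Lemma \ref{lemma-2} it is of the form $F_{r_i}\otimes L_i$ with $\deg L_i=0$; since $\det F_{r_i}\cong\struct{C}$, the line bundle $\det(E_i/E_{i-1})\cong L_i^{\otimes r_i}$ is essentially finite, hence of finite order, and therefore $L_i$ itself is of finite order. On the other hand, $E$ is an iterated extension of Nori-semistable bundles, and $\Ns{C}$ is closed under extensions (an extension of semistable bundles of degree $0$ on a curve is again semistable of degree $0$), so $E$ is Nori-semistable; a second application of Lemma \ref{lemma-2} gives a decomposition $E\cong\bigoplus_{j}F_{s_j}\otimes M_j$ with $\deg M_j=0$.

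The key step is to see that each $M_j$ is again of finite order. Being a direct summand of $E$, the bundle $F_{s_j}\otimes M_j$ lies in $\C^{EN}(C)$; and since $\pi^{EN}(C,x)$ is a quotient of $\Sfgs{C}{x}$, the category $\C^{EN}(C)$ is closed under sub-objects inside $\Ns{C}$. Now $F_{s_j}$ carries a nowhere-vanishing global section, i.e. there is an inclusion of vector bundles $\struct{C}\hookrightarrow F_{s_j}$ with locally free quotient, so tensoring with $M_j$ exhibits $M_j$ as a sub-object of $F_{s_j}\otimes M_j$; hence $M_j\in\C^{EN}(C)$. But a line bundle is semi-essentially finite only if it is essentially finite, so $M_j$ is essentially finite, i.e. of finite order. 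Finally, by Theorem \ref{thm-Ffg-to-Nori} we have $\C_F(C)=\C^N(C)$, and each Atiyah bundle $F_{s_j}$ belongs to $\C_F(C)$ by the analysis of its Frobenius pullbacks carried out in the proof of that theorem using \cite[Proposition 2.10]{Od}; hence $F_{s_j}$ is essentially finite. Therefore each summand $F_{s_j}\otimes M_j$ is essentially finite, and so is $E\cong\bigoplus_j F_{s_j}\otimes M_j$, which completes the proof.

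The step I expect to require the most care is not the bookkeeping with filtrations but the input that the Atiyah bundles $F_r$ are essentially finite in characteristic $p$ — this is precisely where the situation diverges from characteristic zero, where the rank-two Atiyah bundle is semi-essentially finite but not essentially finite, and it is exactly what Theorem \ref{thm-Ffg-to-Nori} (via Lange--Stuhler and Oda) supplies. The only other points needing attention are the soft facts that $\Ns{C}$ is closed under extensions and that $\C^{EN}(C)$ is closed under sub-objects, both of which are immediate from the Tannakian formalism already set up.
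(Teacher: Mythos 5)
Your proof is correct and follows essentially the same route as the paper: both reduce to showing $\C^{EN}(C)\subseteq\C^N(C)$, decompose a semi-essentially finite bundle via Lemma \ref{lemma-2} as $\bigoplus_i F_{r_i}\otimes L_i$, and argue that the Atiyah bundles $F_{r_i}$ are essentially finite while the degree-zero line bundles are torsion. The only divergences are minor: the paper obtains $F_r\in\C^N(C)$ from the inclusion $F_r\in\C^{\mathrm{uni}}(C)\subset\C^N(C)$ (unipotence in characteristic $p$) rather than from Oda's Frobenius-pullback computation via Theorem \ref{thm-Ffg-to-Nori}, and your justification that the line-bundle factors are torsion (closure of $\C^{EN}(C)$ under subobjects applied to $M_j\hookrightarrow F_{s_j}\otimes M_j$) spells out in detail a step the paper leaves implicit.
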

\begin{proof}
First note that if $E\in \C^{EN}(C)$, then $E$ is Nori-semistable. By Lemma \ref{lemma-2}, we have 
$E = \bigoplus _{i = 1}^k F_{r_i}\otimes L_i$, where each $L_i$ is a line bundle of degree $0$. 
Since each $F_r\in \C^\mathrm{uni}(C)\subset \C^N(C)$ and $E\in \C^{EN}(C)$, it follows that each $F_{r_i}\otimes L_i$ 
are semi-essentially finite vector bundles and each $L_i$ is torsion line bundle. This completes the proof.
\end{proof}

\begin{remark}\rm{
In characteristic zero, the Tannakakian group scheme corresponding to the category $\C(F_2)$ generated by 
$S(F_2)$ (in the sense of \cite[\S 2.1]{No}) is isomorphic to $\mathbb{G}_a$ \cite[Proposition 3.3]{Le}. 
While in positive characteristic, we get finite group scheme. To see this, let us recall that 
\begin{equation}\label{eq-multiplication}
F_2\otimes F_r = \left\{ \begin{array}{ll}
         F_r \oplus F_r & \mbox{if $r = 0\mathrm{(mod)} p$};\\
         F_{r-1}\oplus F_{r+1} & \mbox{otherwise}.\end{array} \right.
\end{equation}
Using \eqref{eq-multiplication}, it is easy to see that $S(F_2)$ is finite, and hence $F_2$ is finite vector bundle
\cite[Lemma 3.1]{No}.
This implies that Tannakakian group scheme corresponding to the category $\C(F_2)$ is a finite group scheme.
}
\end{remark}
\subsection*{Acknowledgement} This research work was partially carried out during a visit to ICTS-TIFR Bangalore
for participating in the program - Complex Algerbaic Geometry (Code: ICTS/cag/2018/10).


\end{document}